\documentclass[12pt]{amsart}
\setlength{\headheight}{8pt}
\setlength{\textheight}{22.4cm}
\setlength{\textwidth}{14.5cm}
\setlength{\oddsidemargin}{.1cm}
\setlength{\evensidemargin}{.1cm}
\setlength{\topmargin}{0.2cm}

%%%%%%%%%%%%%%%%%%%%%%%%%%%%%%%%%%% packages %%%%%%%%%%%%%%%%%%%%
\usepackage{a4}
\usepackage{amssymb}
\usepackage{amsmath}
\usepackage{amsthm}
\usepackage{amstext}
\usepackage{amscd}
\usepackage{latexsym}
\usepackage{graphics}
\usepackage{color}
%%%%%%%%%%%%%%%%%%%%%%%%%%%%%%%%%%%%%%%%%%%%%%%%%%%%%%%%%%%%%%%%%

\newtheorem{thm}{Theorem}[section]
\newtheorem{lem}[thm]{Lemma}
\newtheorem{cor}[thm]{Corollary}
\newtheorem{prop}[thm]{Proposition}

\theoremstyle{definition}
\newtheorem{rmk}[thm]{Remark}
\newtheorem{defn}[thm]{Definition}

%%%%%%%%%%%%%%%%%%%%%%%%%%%%%%%%%%%%%%%%%%%%%%%%%%%%%%%%%%%%%%%%%%%%%%%%%

\newcommand{\tensor}{\otimes}

\newcommand{\intersection}{\cap}

\newcommand{\Ker}{{\rm Ker \ }}

\newcommand{\Spec}{{\rm Spec \,}}

\newcommand{\sC}{{\mathcal C}}

\newcommand{\sE}{{\mathcal E}}

\newcommand{\sL}{{\mathcal L}}

\newcommand{\sO}{{\mathcal O}}

\newcommand{\C}{{\mathbb C}}

\newcommand{\Z}{{\mathbb Z}}

\input{xy}
\xyoption{all}
%%%%%%%%%%%%%%%%%%%%%%%%%%%%%%%%%%%%%%%%%%%%%%%%%%%%%%%%%%%%%%%%%%%%%%%%%%
\begin{document}
\title[Unitary representations of the fundamental group]{Unitary 
representations of the fundamental group of orbifolds}

\author[I. Biswas]{Indranil Biswas}

\address{School of Mathematics, Tata Institute of Fundamental
Research, Homi Bhabha Road, Bombay 400005, India}

%\email{indranil@math.tifr.res.in}

\author[A. Hogadi]{Amit Hogadi}

%\address{School of Mathematics, Tata Institute of Fundamental
%Research, Homi Bhabha Road, Bombay 400005, India}

%\email{amit@math.tifr.res.in}

\subjclass[2000]{14F05, 14J60}

\keywords{Orbifolds, polystable bundle, fundamental group,
unitary representation}

\date{}

\begin{abstract}
There is a well known bijective correspondence between the
isomorphism classes of
polystable vector bundles $E$ with $c_i(E)\,=\,0$ for
$i\, \geq\, 1$ on a
smooth complex projective variety and the equivalence
classes of unitary representations of the fundamental group
of the variety. We show that this bijective correspondence
extends to smooth orbifolds.
\end{abstract}

\maketitle

\section{Introduction}

Let $Y/\C$ be a connected smooth projective curve, and let 
$\sE\,\longrightarrow \, Y$ be
a polystable vector bundle of degree zero. A celebrated theorem of 
Narasimhan and Seshadri, \cite{NS}, says that $\sE$ is necessarily given 
by a 
unitary representation of the
fundamental group of $Y$. Let $X/\C$ be a smooth projective variety
of dimension $n$.
Fix an ample line bundle $\mathcal L$ on $X$ in order to define
the degree of a torsionfree coherent sheaf on $X$. The following 
generalization of the Narasimhan--Seshadri theorem holds.

\begin{thm}[\cite{Do}, \cite{mr}]\label{mrns}
Let $\sE$ be a vector bundle on $X$ such that $c_1(\sE)\,=\,0$ and 
$c_2(\sE)\cdot c_1({\mathcal L})^{n-2}\,=\,0$. Then $\sE$ is polystable
if and only if it is given by a 
unitary representation of the (topological) fundamental group of $X$.
\end{thm}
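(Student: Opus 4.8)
The plan is to prove the two implications separately; essentially all of the work is in the ``only if'' direction. Throughout, let $\omega$ be a K\"ahler form in the class $c_1(\sL)$, and compute degrees with respect to $\omega$; set $r\,=\,\rank\sE$. For the ``if'' direction, suppose $\sE$ arises from a unitary representation $\rho\colon\pi_1(X)\,\longrightarrow\, U(r)$. Then $\sE$ carries the associated flat unitary connection, so its Chern--Weil forms vanish identically and hence $c_1(\sE)\,=\,0$ and $c_2(\sE)\,=\,0$ in $H^*(X,\R)$; in particular both numerical hypotheses hold. Moreover a flat unitary connection is Hermitian--Einstein with Einstein constant $0$, and the elementary half of the Kobayashi--Hitchin correspondence (Kobayashi, L\"ubke) shows that a bundle admitting a Hermitian--Einstein metric is polystable. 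Thus $\sE$ is polystable.

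For the converse, assume $\sE$ is polystable with $c_1(\sE)\,=\,0$ and $c_2(\sE)\cdot c_1(\sL)^{n-2}\,=\,0$. The first and decisive step is to invoke the Donaldson--Uhlenbeck--Yau theorem: a polystable bundle on a compact K\"ahler manifold admits a Hermitian--Einstein metric $h$. Since $\deg\sE\,=\,c_1(\sE)\cdot c_1(\sL)^{n-1}\,=\,0$, the Einstein constant is $0$, so the curvature $F$ of the Chern connection of $(\sE,h)$ satisfies $\Lambda F\,=\,0$, where $\Lambda$ denotes contraction with $\omega$.

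The second step is the Bogomolov--L\"ubke identity for a Hermitian--Einstein metric: there is a universal constant $C\,>\,0$ with
\[
\int_X\Big(2r\,c_2(\sE)-(r-1)\,c_1(\sE)^2\Big)\wedge\omega^{n-2}\;=\;C\int_X\Big|F-\tfrac1r(\Tr F)\,{\rm Id}_\sE\Big|^2\,\omega^n .
\]
By hypothesis the left-hand side is $0$, so $F\,=\,\tfrac1r(\Tr F)\,{\rm Id}_\sE$; that is, the connection is projectively flat. Now $\Tr F$ is the curvature of the induced metric on $\det\sE$; it is a $d$-closed primitive $(1,1)$-form, hence harmonic on $X$, and it represents a nonzero multiple of $c_1(\det\sE)\,=\,0$ in de Rham cohomology. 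Uniqueness of harmonic representatives forces $\Tr F\,=\,0$, whence $F\,=\,0$. Therefore the Chern connection of $(\sE,h)$ is flat and unitary, and its monodromy is a unitary representation $\rho\colon\pi_1(X)\,\longrightarrow\, U(r)$ whose associated bundle is $\sE$.

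The serious obstacle is the first step of the converse. The existence of a Hermitian--Einstein metric on a stable bundle — the analytic core of the Kobayashi--Hitchin correspondence — is a hard nonlinear result, proved by running a continuity method or a parabolic flow for the Hermitian--Einstein equation and using stability precisely to prevent degeneration of the solution; the polystable case then follows by decomposing $\sE$ into stable summands of slope $0$, which also reduces the final reconstruction of $\rho$ to the stable case. By contrast the remaining ingredients are formal: the Bogomolov--L\"ubke identity is Chern--Weil theory together with Hodge theory on a K\"ahler manifold, and the passage from a flat unitary connection to a representation of $\pi_1(X)$ is classical.
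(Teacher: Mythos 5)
Your outline is correct, but note that the paper does not prove Theorem \ref{mrns} at all: it is imported wholesale from the cited references \cite{mr} and \cite{Do}, and all of the paper's own work goes into reducing the orbifold statement (Theorem \ref{ns}) to this one. What you have written is a faithful sketch of the standard \emph{analytic} proof, which is essentially the content of the citation \cite{Do} combined with L\"ubke's work: the easy direction via Chern--Weil and the Kobayashi--L\"ubke implication (Hermitian--Einstein $\Rightarrow$ polystable), and the hard direction via Donaldson--Uhlenbeck--Yau, the Bogomolov--L\"ubke identity to kill the trace-free curvature, and the harmonic-representative argument to kill $\Tr F$ (using that a closed primitive $(1,1)$-form on a compact K\"ahler manifold is harmonic and represents $c_1(\det\sE)=0$). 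You correctly isolate the genuinely deep step, namely the existence of the Hermitian--Einstein metric on a stable bundle, and correctly reduce the polystable case to the stable one by decomposing into stable summands of slope zero. The only thing worth adding is that the paper's other citation \cite{mr} points to a genuinely different, more algebraic route: the Mehta--Ramanathan restriction theorem reduces to general complete-intersection curves, where the Narasimhan--Seshadri theorem applies; that route trades the nonlinear PDE for the restriction theorem plus the curve case. Either way the statement stands, and your account of the analytic route contains no gaps beyond the (properly attributed) black box of Donaldson--Uhlenbeck--Yau.
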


Some clarification on Theorem \ref{mrns} is necessary.
By \cite[p. 231, Proposition 1]{Do}, a stable vector bundle on $X$ admits
a Hermitian--Yang--Mills connection. Since a polystable vector bundle $\sE$ on $X$
is a direct sum of stable vector bundles of same ${\rm degree}/{\rm rank}$
quotient, the Hermitian--Yang--Mills connection on the stable direct summands
together produce a Hermitian--Yang--Mills connection on $\sE$. If
$c_1(\sE)\,=\,0$ and
$c_2(\sE)\cdot c_1({\mathcal L})^{n-2}\,=\,0$, then any Hermitian--Yang--Mills
connection on $\sE$ is flat \cite[p. 115, Theorem 4.11]{Ko}. All Chern
classes in Theorem \ref{mrns} are topological, taking values in rational
cohomological classes.

Our aim here is to generalize the 
above theorem to projective orbifolds. By an {\bf orbifold} over a field 
$k$ we always mean a smooth separated Deligne--Mumford stack which is of 
finite type over $k$ and whose isotropy groups at all generic points is trivial.
Thus an orbifold has a dense open substack 
which is a scheme. We call a Deligne--Mumford stack projective if it has 
a coarse moduli space which is a projective variety. 

\begin{thm}\label{ns}
Let $X/\mathbb C$ be an irreducible projective orbifold of dimension
$n$, and let $\sL$ be an ample line bundle on $X$. Let 
$\sE\,\longrightarrow\, X$ be a vector bundle
on $X$ such that $c_1(\sE)\,=\, 0$ and $c_2(\sE)\cdot c_1({\mathcal 
L})^{n-2}\,=\, 0$. Then $\sE$ is polystable (with respect to
$\sL$) if and only if it is obtained by a unitary representation of 
$\pi_1^{top}(X,x)$ for a (equivalently, any) closed point $x\,\in\, X$.
\end{thm}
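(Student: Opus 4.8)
\medskip\noindent\emph{Proof strategy.}\ The plan is to reduce Theorem~\ref{ns} to Theorem~\ref{mrns} by pulling $\sE$ back to a smooth projective variety. The first step is the standard dictionary between flat unitary connections and unitary representations: a flat unitary connection on a vector bundle over $X$ has a unitary monodromy representation of $\pi_1^{top}(X,x)$, and conversely a unitary representation $\rho$ determines a flat unitary $C^\infty$ orbifold bundle whose $(0,1)$--part fixes a holomorphic structure, producing $\sE_\rho$; since a flat connection has vanishing Chern--Weil forms, $\sE_\rho$ automatically satisfies $c_1=0$ and $c_2\cdot c_1(\sL)^{n-2}=0$. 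Hence it suffices to prove that a vector bundle $\sE$ on $X$ (with respect to $\sL$) with $c_1(\sE)=0$ and $c_2(\sE)\cdot c_1(\sL)^{n-2}=0$ is polystable if and only if it carries a flat unitary connection.

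Fix an orbifold K\"ahler form $\omega$ on $X$ with $[\omega]=c_1(\sL)$. For the direction ``flat unitary $\Rightarrow$ polystable'' I would observe that such a connection is Hermitian--Einstein with Einstein constant proportional to $\deg_{\sL}(\sE)=c_1(\sE)\cdot c_1(\sL)^{n-1}=0$, hence equal to $0$; the classical pointwise estimate --- for a coherent subsheaf $\sF\subset\sE$ one expresses $\mu_{\sL}(\sF)$ through the second fundamental form and the curvature, and the Einstein condition forces $\mu_{\sL}(\sF)\le 0$ with equality only when $\sF$ is a parallel subbundle --- is local in nature and carries over to orbifolds, yielding polystability; and $c_1(\sE)=0$, $c_2(\sE)\cdot c_1(\sL)^{n-2}=0$ hold by Chern--Weil.

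For the converse, assume $\sE$ is polystable with $c_1(\sE)=0$ and $c_2(\sE)\cdot c_1(\sL)^{n-2}=0$. By the Donaldson--Uhlenbeck--Yau theorem for projective orbifolds (which follows the classical, essentially local, argument) $\sE$ admits a Hermitian--Einstein metric $h$ with respect to $\omega$, of Einstein constant $0$. The main new ingredient is a geometric lemma: since $X$ is a projective orbifold there is a connected smooth projective variety $Z$ together with a finite flat surjective morphism $f\colon Z\to X$ --- I would obtain such an $f$ by a Kawamata-type cyclic covering construction untwisting the (codimension $\ge1$) stacky locus of $X$. Set $\sL_Z:=f^*\sL$, an ample line bundle on $Z$. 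Then $f^*h$ is a Hermitian--Einstein metric on $f^*\sE$ with respect to $f^*\omega$, so $f^*\sE$ is polystable with respect to $\sL_Z$; moreover $c_1(f^*\sE)=0$ and, by the projection formula, $c_2(f^*\sE)\cdot c_1(\sL_Z)^{n-2}=(\deg f)\,\bigl(c_2(\sE)\cdot c_1(\sL)^{n-2}\bigr)=0$. Theorem~\ref{mrns} applied on $Z$ therefore shows that $f^*\sE$ carries a flat unitary connection; writing the associated isotypic decomposition as $f^*\sE=\bigoplus_i W_i\otimes V_i$, with the $V_i$ mutually non-isomorphic irreducible flat unitary bundles (so $H^0(Z,\sEnd V_i)=\C$) and $W_i$ the multiplicity spaces, the flat unitary metric on $f^*\sE$ takes the form $h_0=\bigoplus_i k_i^0\otimes g_i$ with $g_i$ the flat unitary metric on $V_i$.

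It remains to descend flatness. Since $h_0$ is flat it is Hermitian--Einstein with respect to $f^*\omega$, and any two Hermitian--Einstein metrics on the polystable bundle $f^*\sE$ differ by an automorphism of $f^*\sE$ that is parallel for the flat connection; by $H^0(Z,\sEnd V_i)=\C$ such an automorphism is block-scalar on the multiplicity spaces, so $f^*h$ again has the form $\bigoplus_i k_i\otimes g_i$ and its Chern connection coincides with that of $h_0$. Hence the curvature of the Chern connection of $h$ pulls back to $0$ under $f$, and because $f$ is faithfully flat this curvature vanishes: $\sE$ carries a flat unitary connection, whose monodromy is the required unitary representation of $\pi_1^{top}(X,x)$, independent of the base point up to conjugacy. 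I expect the main obstacle to be the geometric lemma --- producing the smooth projective cover $f\colon Z\to X$ --- together with having the orbifold Donaldson--Uhlenbeck--Yau theorem (a polystable bundle on a projective orbifold admits a Hermitian--Einstein metric) at our disposal; granted these, the rest of the argument is formal.
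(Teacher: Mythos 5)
Your route is genuinely different from the paper's. The paper never solves any PDE on the orbifold itself: it first proves the statement when $X$ is a finite global quotient $[Y/G]$ (Lemma \ref{global}, by applying Theorem \ref{mrns} on $Y$ and averaging the flat metric on the universal cover over coset representatives of $\pi_1^{top}(Y,y)$ in $\pi_1^{top}(X,x)$), then constructs (Theorem \ref{rd}) a global-quotient orbifold $\phi\colon Y\to X$ which is a rigidification away from a closed substack of codimension at least two, and finally descends the unitary representation from $\pi_1^{top}(Y,y_0)$ to $\pi_1^{top}(X,\phi(y_0))$ via the exact sequence of Theorem \ref{pi1}, after observing that the extra isotropy acts trivially on the fibres of $\phi^*\sE$. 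You instead propose to produce a Hermitian--Einstein metric on $\sE$ over $X$ and descend flatness from a finite cover by a smooth variety. If your inputs were all available this would work and would be shorter, but the inputs are exactly where the content lies.

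There are three concrete gaps. First, the Donaldson--Uhlenbeck--Yau theorem for projective orbifolds is not ``essentially local'': it is a global existence theorem for a nonlinear elliptic equation, and assuming it on an arbitrary orbifold in the paper's sense (which may have generic stabilizers, i.e.\ be a gerbe) is assuming machinery at least as strong as the statement the paper derives from the variety case. Indeed, once one has an Einstein metric of constant $0$ on $\sE$ over $X$, the Kobayashi--L\"ubke inequality together with $c_1(\sE)=0$ and $c_2(\sE)\cdot c_1(\sL)^{n-2}=0$ gives flatness directly, so your detour through $Z$ would then be unnecessary. Second, a Kawamata-type cyclic cover branched along divisors cannot untwist stacky structure supported in codimension at least two (e.g.\ isolated stacky points on a surface) nor a gerbe; this is precisely why the paper's Theorem \ref{rd} only achieves a rigidification in codimension one and must then invoke the purity statement underlying Theorem \ref{pi1}. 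The finite flat surjection from a smooth projective variety that you want does exist, but by the theorem of Kresch--Vistoli \cite{vistoli-kresch} (which the paper uses in Proposition \ref{prop-a}), not by a covering trick. Third, your uniqueness-of-Hermitian--Einstein-metrics argument on $Z$ compares $f^*h$ and $h_0$ with respect to $f^*\omega$, which is only a degenerate semi-positive form along the ramification of $f$; the classical uniqueness theorem requires a genuine K\"ahler form, and $f^*h$ is not known to be Einstein for a K\"ahler form in the class $c_1(f^*\sL)$. This last defect is repairable (again by working directly on $X$ with Kobayashi--L\"ubke), but as written the descent step does not go through.
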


Here $\pi_1^{top}(X,x)$ denotes
the topological fundamental group of the underlying
complex analytical stack. See \eqref{ampledefn}
for definition of ample line bundles on a stack. All Chern
classes in Theorem \ref{ns} are topological, taking values in rational
cohomological classes. Theorem \ref{ns} follows
quite easily from Theorem \ref{mrns} in the 
special case when $X$ is a global quotient of a smooth variety by a 
finite group. The main work needed to prove Theorem \ref{ns}
involves deducing
the general case from this special case.

\section{Rigidification}

Let $k$ be a field. Throughout this section we work with 
Deligne--Mumford 
stacks over $k$, and assume that all these stacks are of finite type and 
separated over $k$ and are quotient stacks, meaning, can be expressed as 
quotient of an affine scheme by the action of a linear algebraic group. 
As shown in 
\cite[Proposition 5.2]{kresch}, such stacks, Zariski locally, 
can be expressed as 
quotient of an affine variety by a finite group. The inertia stack of a 
stack $Y$, viewed as a sheaf of groups on the big \'etale site of $Y$,
will be denoted by $I_Y$.

Recall that the notion of rigidification of a Deligne--Mumford stack is 
a 
process to get rid of given stabilizers in a ``minimal'' manner;
see \cite[Section 5.1]{rigid}. Throughout this
paper, we fix the following definition of rigidification. It
should be clarified that this definition differs from the one in
\cite[Section 5.1]{rigid}.

\begin{defn}
Let $f:Y\longrightarrow X$ be a $1$--morphism of
Deligne--Mumford stacks over $k$. This $f$ is called a {\bf 
rigidification} if 
for any atlas $U\,\longrightarrow\,X$, where $U$ is a scheme, 
the projection $Y\times_XU\,\longrightarrow\, U$ 
is a coarse moduli space. 
\end{defn}

Since the morphism to the coarse moduli space of
a separated Deligne--Mumford stack is always proper, it follows that a
rigidification morphism is a proper morphism.

As a direct consequence of the above definition, it follows that every 
morphism from a Deligne Mumford stack to its coarse moduli space is a 
rigidification. Proposition \ref{rspecial} provides additional 
examples of rigidification.

\begin{lem}\label{specialdiag}
Let $g:Y_1\longrightarrow Y_2$ be a $1$--morphism of Deligne--Mumford 
stacks over $k$. Then for $i=1,2$, there exists 
\begin{enumerate}
\item[(i)] affine schemes $U_i$ and linear algebraic groups $G_i/k$ 
acting on $U_i$,
\item[(ii)] a group homomorphism $\phi:G_1\longrightarrow G_2$, and 
\item[(iii)] a $k$--morphism $f:U_1\longrightarrow U_2$ which is 
equivariant with respect to $\phi$,
\end{enumerate}
such that $Y_i=[U_i/G_i]$, and the following diagram is commutative
$$\xymatrix{
U_1\ar[r]^f\ar[d] & U_2 \ar[d] \\
Y_1=[U_1/G_1] \ar[r] & [U_2/G_2]=Y_2 
}$$
\end{lem}

\begin{proof}
Take any $i\, \in\, [1\, ,2]$.
Since $Y_i$ is a quotient stack, we can write $Y_i=[V_i/H_i]$, where 
$V_i$ is an affine scheme and $H_i$ is a linear algebraic group acting on $V_i$. 
 Now we let 
$$\widetilde{V}_1=V_1\times_{Y_2}V_2= 
V_1\times_{Y_1}(V_2\times_{Y_2}Y_1)\, .$$
Then clearly $$[\widetilde{V}_1/(H_1\times H_2)]=[V_1/H_1]\, .$$
The projection 
$$ \widetilde{V}_1\longrightarrow V_1$$ is affine, because it
is a base extension of the affine morphism $V_2\longrightarrow Y_2$.

Since 
$V_1$ is an affine variety, and the projection 
$\widetilde{V}_1\longrightarrow V_1$ is 
affine, we conclude that $\widetilde{V}_1$ is also an affine 
variety. Also, we have a
natural map $$h:\widetilde{V}_1\longrightarrow V_2$$ which is 
equivariant for the actions
of $H_1\times H_2$ on $\widetilde{V}_1$ and $V_2$, with $H_1\times H_2$
acting on $V_2$ through the projection
$H_1\times H_2\longrightarrow H_2$.
Now, because of the identification $[\widetilde{V}_1/(H_1\times 
H_2)]=[V_1/H_1]$, the proof is completed by setting $G_1=H_1\times H_2$, 
$G_2=H_2$, $U_1=\widetilde{V}_1$ and $U_2=V_2$.
\end{proof}

The following lemma is a generalization of the universal property of
coarse moduli spaces.
\begin{lem}\label{factor}
Let $g:Y_1\longrightarrow Y_2$ be as in Lemma \ref{specialdiag}. Assume, 
that for 
every point $p$, the map induced by $f$ from the isotropy group of 
$Y_1$ at $p$ to that of $Y_2$ at $f(p)$ is trivial. Then $f$ factors 
through the coarse moduli space of $Y_1$.
\end{lem}

\begin{proof}
Let $U_i, G_i, \phi, f$ be as in Lemma \ref{specialdiag}. Let 
$K={\rm Kernel}(\phi)$. The assumption that the map induced by $f$ on 
all isotropy groups is trivial is equivalent to saying that the 
stabilizer of every point in $U_1$ is contained in $K$.

We claim that the geometric quotient $V=U_1/\!/K$ exists as an 
algebraic 
space. In order to prove this, first consider the stack $[U_1/K]$. 
It is a Deligne Mumford stack, because the isotropy groups 
of the action of $G_1$ (and hence of $K$) on $U_1$ are finite \'etale 
over $k$. Thus $V$, which is nothing but the coarse 
moduli space of $[U_1/K]$, exists as an algebraic
space by a theorem of Keel and Mori \cite[Theorem 1.1]{keelmori}.

Next, we will show that the action of $G_1/K$ on $V$ is free. To prove 
this, take
$\widetilde{x}\in V$ and $\overline{g}\in G_1/K$ such that 
$\overline{g}(\widetilde{x})=\widetilde{x}$. Choose a point $x\in U_1$ 
lying over $\widetilde{x}$, and also fix a lift $g\, \in\, G_1$ of 
$\overline{g}$. 
Since $V$ 
is a geometric quotient of $U_1$ by $K$, points of $V$ correspond to 
orbits in $U_1$ for the action of $K$. Thus, there exists an
element $a\in K$ such that 
$$ gx = ax \, .$$
Alternatively, $a^{-1}g$ is in the stabilizer of the point $x$ and hence 
is contained in $K$. Thus $g\in K$, which implies that 
$\overline{g}\,=\, eK$. This proves the above assertion that the action 
of $G_1/K$ on $V$ is free.

Consequently, the quotient stack $[V/(G_1/K)]$ is actually an algebraic 
space which 
is also the geometric quotient of $U_1$ by $G_1$, and hence 
$[V/(G_1/K)]$ is the coarse moduli space of $Y_1$.

Since the map $f:U_1\longrightarrow U_2$ intertwines the given action of 
$K$ on $U_1$ and the trivial action of $K$ on $U_2$, it factors 
through $V$. 
Thus we have an induced map $V\longrightarrow U_2$ which is equivariant
with respect to the action of $G_1/K$ on $V$ and the action of $G_2$ on 
$U_2$, using the homomorphism $\phi$ in Lemma \ref{specialdiag}.
This in turn induces a map $[V/(G_1/K)]\longrightarrow [U_2/G_2]$. But 
as explained above, 
$[V/(G_1/K)]$ is the coarse moduli space of $Y_1$. This finishes the proof.
\end{proof}

\begin{lem}\label{closed}
Let $f:Y\longrightarrow X$ be a $1$--morphism of separated 
Deligne--Mumford stacks over 
$k$. Let $\Sigma$ denote the set of all points in $Y$ where the induced 
map on isotropy groups is an isomorphism. Then $\Sigma$ is an open subset of $Y$.
\end{lem}

\begin{proof}
Since the Deligne--Mumford stacks are separated, the inertia stacks 
$I_Y$ 
and $I_X$ are finite over $Y$ and $X$ respectively. Thus the kernel and 
cokernel of the induced map $I_Y\longrightarrow f^{-1}(I_X)$ are finite 
over $X$. Hence the set of all points where both the kernel and cokernel 
are trivial is an open subset of $Y$.
\end{proof}

\begin{prop}\label{rspecial}
Let $U/k$ be a $k$-scheme, and let $G$ be a finite group acting on $U$. 
Let 
$K$ be a normal subgroup of $G$, and let $V$ denote the geometric 
quotient of $U$ by $K$. Then the following two hold:
\begin{enumerate}
 \item[(i)] The natural map $f:[U/G]\longrightarrow [V/(G/K)]$ is a 
rigidification.
\item[(ii)] If $U$ is spectrum of a local ring, then any rigidification 
of $[U/G]$ is of this type for some normal subgroup $K$ of $G$.
\end{enumerate}

\end{prop}
\begin{proof}$(i)$ \ Since $V\longrightarrow [V/(G/K)]$ is \'etale, and 
$[U/K]\longrightarrow V$ is 
a coarse moduli space, in order to prove the statement it is enough to 
show 
that the following diagram is Cartesian:
$$\xymatrix{
[U/K]\ar[r]^{f'}\ar[d]_g & V\ar[d]^{g'} \\
[U/G]\ar[r]^f & [V/(G/K)]
}$$

We first recall the description of the $1$-morphism $f$. 
The $k$-groupoid underlying the stack $[U/G]$ is the category of all 
triples $$\left( Z, P\stackrel{h}{\longrightarrow} 
Z,P\stackrel{\phi}{\longrightarrow} U 
\right)\, , $$ where $Z$ is a $k$ scheme, $h$ is a principal $G$-bundle 
and 
$\phi:P\longrightarrow U$ is a $G$-equivariant morphism. Given such a 
triple, one has the corresponding induced map
$$\phi'\,:P'\,=P/K\,\longrightarrow \,V\, ,$$ where 
$P/K\longrightarrow Z$ is now a principal
$G/K$-bundle. One thus gets a triple $(Z,P'\stackrel{h'}{\to}Z, \phi')$ 
which defines an object in the groupoid underlying $[V/(G/K)]$. This assignment 
$$ (Z,P,\phi)\longrightarrow (Z,P',\phi')$$ defines the $1$-morphism 
$f$. 

To prove that the above diagram is Cartesian, we need to exhibit a 
$1$-isomorphism $$[U/K]\longrightarrow [U/G]\times_{[V/(G/K)]}V\, .$$
By construction of fiber product of groupoids, the groupoid underlying 
$$[U/G]\times_{[V/(G/K)]}V$$ is the category of all
$$(Z,P\stackrel{h}{\longrightarrow}Z,\phi,Z
\stackrel{\gamma}{\longrightarrow} V,\theta)\, ,
$$
where $(Z,P\stackrel{h}{\longrightarrow}Z,\phi)$ is an 
object, say $\alpha$, of groupoid underlying $[U/G]$, and $\gamma$ 
defines 
an object, say $\beta$, of the groupoid underlying $V$, and $\theta$ is 
an isomorphism $f(\alpha)\longrightarrow g'(\beta)$. Note that by above 
discussion, $f(\alpha)$ is nothing but the object defined by the triple 
$$(Z,P',\phi')\, ,$$
where $P'$ is the principal $G/K$-bundle $P/K\longrightarrow Z$, and 
$\phi':P'\longrightarrow V$ 
is the induced morphism. Moreover, $g'(\beta)$ is nothing but the triple
$$(Z,Z\times (G/K), \widetilde{\gamma})\, ,$$
where $Z\times G/K\longrightarrow Z$ is the trivial principal 
$G/K$-bundle, 
and $$\gamma:Z\times G/K\longrightarrow V$$ is the $G/K$ equivariant map 
given by 
$$(z,g) \,\longmapsto\, g\cdot \gamma(z)\, .$$
Thus giving the isomorphism 
$$ \theta:(Z,P',\phi')\longrightarrow 
(Z,Z\times(G/K),\widetilde{\gamma})$$
is equivalent to giving a trivialization (or a section) of 
$P'\longrightarrow Z$ and 
imposing the condition that the induced map $\phi'$ coincides with 
$\widetilde{\gamma}$. Note that $\gamma:Z\longrightarrow V$ could have 
been any 
arbitrary $k$-morphism, to start with. Thus, the category underlying the 
groupoid $[U/G]\times_{[V/(G/K)]}V$ coincides with the category of 
all quadruples
$$(Z,P\stackrel{h}{\longrightarrow}Z,\phi,s)\, ,$$
where $Z,P$ and $\phi$ are as above, and $s$ is a trivialization of the 
principal $G/K$-bundle $h':P/K\longrightarrow Z$. But giving a 
trivialization of $h'$ 
is equivalent to giving a reduction of the structure group of $h$ to 
$K$. Thus the above category of quadruples is equivalent to the 
category of all triples
$$(Z,Q\stackrel{q}{\longrightarrow}Z,\psi)\, ,$$
where $q:Q\longrightarrow Z$ is a principal $K$-bundle and 
$\psi:Q\longrightarrow U$ is a 
$K$-equivariant morphism. One now sees that this is also the groupoid 
underlying $[U/K]$.

\noindent $(ii)$ Let $U$ be a spectrum of a local ring, and let $G$
be a finite group acting on $U$. Define $Y:=[U/G]$.
Let $$f\,:\,Y\,\longrightarrow\, X$$
be any rigidification map. Let $K$ be the kernel of
the homomorphism $I_p\longrightarrow 
I_{f(p)}$, where $I_p$ and $I_{f(p)}$ are the isotropy groups 
for the unique closed point 
$p$ of $Y$ and $f(p)$ respectively. Consider the $G$-invariant map 
$$g\,:\,U\,\longrightarrow\, X$$ 
induced by $f$. Let $V$ be the geometric quotient of $U$ by $K$.

We will first show that $g$ factors through $V$. Indeed, the 
induced map from the stack $[U/K]\longrightarrow X$ is trivial on all 
isotropy 
groups, and hence by Lemma \ref{factor}, it factors through the coarse 
moduli space, which is $V$.

Since $g$ is $G$-invariant, the induced map 
$V\longrightarrow X$ is $G/K$ invariant, and hence it gives us a map 
$$[V/(G/K)]\,\longrightarrow\, X\, .$$ Clearly, this map 
is also a rigidification, but it is also an isomorphism on the isotropy group at the closed 
point of the domain. Hence by Lemma \ref{closed}, it is an isomorphism 
on the isotropy 
group at all points. Therefore, this map must be an isomorphism. 
\end{proof}

\begin{lem} \label{uniqueness}
Let $f:Y\longrightarrow X$ be a rigidification, and 
let $g:Y\longrightarrow Z$ be any $1$-morphism. 
Suppose $g$, factors through $f$, meaning there exists a $1$-morphism 
$h:X\longrightarrow Z$ and a $2$-isomorphism $\alpha\, :\, h\circ f 
\, \Rightarrow\, g$. Then 
the pair $(h,\alpha)$ does not have any $2$-automorphisms, i.e., for
any $\theta\, :\, h\,\Rightarrow\, h$, the commutativity of the diagram
$$\xymatrix{
h\circ f \ar[d]_{\theta\circ f} \ar[r]^\alpha & g \ar@{=}[d] \\
h \circ f \ar[r]^{\alpha} & g
}$$
implies that $\theta$ is identity. 
\end{lem}

\begin{proof}
For an algebraically closed field $K$ over $k$, let $X_K$ denote the 
category of $K$-points of $X$. Thus $f$, $g$ and $h$ induce functors
$$ \xymatrix{
Y_K \ar[d]_{f_K} \ar[r]^{g_K} & Z_K \\
X_K \ar[ru]_{h_K}
}$$
and a natural equivalence $\alpha_K\,:\, h_K\circ f_K \,\Rightarrow\, 
g_K$. In order to prove that $\theta$ is identity, it is enough to show 
that 
$\theta_K$ is identity for every algebraically closed field $K$ over 
$k$. However, from the definition of rigidification it follows that the 
functors $f_K$ is full and essentially surjective. It is then a simple 
exercise in category theory to show that $\theta_K$ is identity.
\end{proof}

Let $f:Y\longrightarrow X$ be a rigidification. Let $I_Y$ 
(respectively, 
$I_X$) be the inertia sheaf of $Y$ (respectively, $X$), and 
define $$K^f\,:=\,\Ker(I_Y\longrightarrow 
f^{-1}(I_X))\, .$$ Then we claim that $f$ has the following universal 
property (which is a generalization of Lemma \ref{factor}):

\begin{prop}\label{universal}
Given any $1$-morphism $g:Y\longrightarrow Z$ such that $K^f$ is 
contained in the kernel of $I_Y\longrightarrow g^{-1}(I_Z)$, there 
exists a morphism $h:X\longrightarrow Z$, and a $2$-isomorphism 
$\alpha\, :\, h\circ f\,\Rightarrow \, g$. Further $(h,\alpha)$ is 
uniquely determined up to a unique $2$-isomorphism (see 
\eqref{uniqueness}). 
\end{prop}

\begin{proof} Uniqueness of the $2$-isomorphism follows from Lemma 
\ref{uniqueness}.
Because of the uniqueness claimed, it is enough to prove the proposition 
Zariski locally 
around a point $p \in Y$. Thus by Lemma \ref{specialdiag} we may 
reduce to the case where $Y=[U_1/G_1]$, $Z=[U_2/G_2]$, and $g$ is 
induced by a map $h:U_1\longrightarrow U_2$ which is equivariant with 
respect to a given group 
homomorphism $\phi:G_1\longrightarrow G_2$. Moreover, we may assume that 
$G_1$ is precisely the isotropy group at the point $p$, and $K^f\subset 
\Ker(\phi)$ is a normal subgroup of $G_1$. By Proposition \ref{rspecial}(ii), by further taking a smaller Zariski neighborhood of $p$, we may assume that the rigidification $f:Y\to X$ is of the form $[V/(G_1/K^f)]$ where $V$ is the geometric quotient of $U_1$ by $K^f$. Note that although Proposition \ref{rspecial}(ii) is stated in the case where $U_1$ is spectrum of a local ring, we use standard limiting argument to draw conclusion about a small enough Zariski neighborhood of $U_1$ (and hence also of $Y$). Since $K^f \subset \Ker(\phi)$, $h$ induces a map from $V\to U_2$ which is equivariant w.r.t. $G_1/K^f$ action on $V$ and $G_2$ action on $U_2$. Thus it induces a unique map $[V/(G_1/K^f)]\longrightarrow Z$. This proves the claim.
\end{proof}

We have the following corollary of Proposition \ref{universal}.

\begin{cor}
A rigidification $f:Y\longrightarrow X$ is determined uniquely by 
$$\Ker(I_Y\longrightarrow f^{-1}(I_X))\, .$$ 
\end{cor}

\section{Rigidification of Complex Deligne--Mumford stacks}

We continue using the notation and assumptions of the previous section, 
but now restrict ourselves to the case $k=\C$. Although our main 
interest is algebraic stacks, we will have to consider complex analytic
Deligne--Mumford stacks over $\C$ which arise as infinite covering 
spaces 
of algebraic stacks. These complex analytic stacks will have the 
property that of being, locally in complex analytic topology,
quotient of an analytic variety by a finite group. In this section we 
only consider complex analytic stacks which have this additional 
property. We have the analogous definition and results for complex 
analytic Deligne--Mumford stacks.

\begin{defn}
Let $Y$ be a complex analytic Deligne--Mumford stack. Then a 
rigidification of $Y$ is a $1$-morphism $f:Y\longrightarrow X$, which
locally on $X$ in analytic topology, is a coarse moduli space.
\end{defn}

Proof of the following universal property is similar to that of 
Proposition \ref{universal}, thanks to the assumption that all complex 
analytic stacks considered here are locally quotient spaces for finite 
groups actions.

\begin{prop}\label{universalc}
Let $f:Y\longrightarrow X$ be a rigidification of complex analytic 
stack $Y$. Let 
$K^f$ be the kernel of the homomorphism $I_Y\longrightarrow 
f^{-1}(I_X)$. Then given any $1$-morphism 
$g:Y\longrightarrow Z$ such that $K^f$ is in the kernel of 
$$I_Y\,\longrightarrow\, g^{-1}(I_Z)\, ,$$ 
the $1$-morphism $g$ 
factors uniquely (up to a $2$-morphism) through $f$. In particular the 
rigidification $f$ is itself uniquely determined by the sheaf $K^f$.
\end{prop}

\begin{lem}\label{etalec}
Let $Y$ be a Deligne--Mumford stack over $\C$, and 
let $h:Z\longrightarrow Y$ be a 
(possibly infinite) covering map. Let $f:Y\longrightarrow X$ be a 
rigidification, 
and $$K^f\,:=\,\Ker(I_Y\longrightarrow f^{-1}(I_X))\, .$$ 
Assume that $h^{-1}(K^f)$ lies 
in the image of the homomorphism $I_Z\longrightarrow h^{-1}(I_Y)$.
Then there exists a unique (up to isomorphism) covering 
map $X'\longrightarrow X$ such that $Z\,\cong\, X'\times_XY$.
\end{lem}

\begin{proof} 
Since $h$ is representable, the homomorphism $I_Z\longrightarrow 
h^{-1}(I_Y)$ is injective.
We first claim that there exists a rigidification $g:Z\longrightarrow 
Z'$ such that 
$$h^{-1}(K^f)=\Ker(I_Z\longrightarrow g^{-1}(I_{Z'}))\, .$$
By uniqueness in Proposition \ref{universalc}, 
we may construct the rigidification analytically locally on $Z$. Due to
Proposition \ref{rspecial}, we may assume that $Y=[U/G]$, where $G$ is a 
finite group acting on an affine variety $U$, and there exists a normal 
subgroup $H\subset G$ such that $X=[V/(G/H)]$, where $V$ is the 
geometric quotient of $U$ by $H$. Note that since the map $h$ is 
representable, $\widetilde{U}=U\times_YZ\longrightarrow U$ is 
representable, and hence 
$\widetilde{U}$ is a analytic space. Moreover, it has an induced action 
of $G$. Now it is easy to see that the $Z'=[\widetilde{V}/(G/H)]$, 
where $\widetilde{V}$ is 
the geometric quotient of $\widetilde{U}$ by $H$, satisfies the claim.
Finally, $X'\,=\, Z'$ satisfies the condition in the lemma.
\end{proof}

Let $Y$ be a complex analytic Deligne--Mumford stack over $\C$, and let 
$y$ be a 
$\C$-point of $Y$. Let $\pi_1^{top}(Y,y)$ denote the topological 
fundamental group of $Y$. Let $G_y$ be the isotropy group of the stack $Y$ at the point $y$. 
The stack $[\Spec(\C)/G_y]$ will be denoted by
$BG_y$; it is well known that $[\Spec(\C)/G_y]$ is the 
classifying space of the group $G_y$. Since $y$ is a complex point, the 
residual gerbe at $y$ is neutral (as $\C$ is algebraically closed!). Thus there exists a canonical morphism 
$$\eta_y: \Spec(\C)/[G_y] \longrightarrow Y\, .$$
This induces a map on the fundamental groups 
$$ \pi_1^{top}(BG_y,p) \longrightarrow \pi_1^{top}(Y,y)\, ,$$ 
where $p$ is the unique (up to a $2$-isomorphism) complex point of $BG_y$. 
However, $$\pi_1^{top}(BG_y,p)=G_y\, .$$
We thus get a canonical map
$$\phi_y:G_y\longrightarrow \pi_1^{top}(Y,y)$$
(see also (\cite[Section 7]{noohi}).
Moreover, if $y'$ is any other complex point of $Y$, then composing $\phi_{y'}$ with a choice of an inner automorphism 
$$\pi_1^{top}(Y,y)\longrightarrow \pi_1^{top}(Y,y')$$ gives us a map
$$ \phi_{yy'}:G_{y'}\longrightarrow \pi_1^{top}(Y,y)$$
which is well defined up to an inner automorphism of $\pi_1^{top}(Y,y)$.

We now set up some more notation. Let $f:Y\longrightarrow X$ be a 
rigidification of Deligne-Mumford stacks over $\C$. Consider $K^f$ 
defined as in the statement of Lemma \ref{etalec}.
For any $\C$-point $p$ of $Y$, let $K^f_p \subset G_p$ be the stalk of 
$K^f$ at $p$. Fix a point $y\in Y(\C)$, and let $x=f(y)$. Let 
$N^{top}(f)$ denote the normal subgroup of $\pi_1^{top}(Y,y)$ generated 
by all $\phi_{yy'}(K^f_{y'})$, where $y'$ runs over $\C$-points of 
$Y$. Note that 
since $\phi_{yy'}$ is well defined up to conjugation, the 
subgroup $N^{top}(f)$ is well defined. 

\begin{lem}\label{ntopkf}
Notation is as above. Let $h:Z\longrightarrow Y$ be a connected (not 
necessarily finite) 
Galois \'etale cover with Galois group $G$. Let $z\in Z$ be a $\C$-point 
which lies over $y$. Then the following conditions are equivalent.
\begin{enumerate}
 \item $N^{top}(f)$ is contained in the kernel of 
$\pi_1^{top}(Y,y)\longrightarrow G$.
 \item $h^{-1}(K^f)$ lies in the image of $I_Z\longrightarrow 
h^{-1}(I_Y)$.
\end{enumerate}
\end{lem}

\begin{proof}
Let $z'$ be any $\C$-point of $Z$, and let $y'=h(z')$. As explained 
above, we have a natural $1$-morphism 
$$ BG_{y'}\longrightarrow Y\, .$$
Since $h:Z\longrightarrow Y$ is a Galois \'etale cover with Galois group 
$G$, it follows that $Z_{y'}=Z\times_YBG_{y'}\to BG_{y'}$ is also a Galois \'etale 
cover (not necessarily connected). Here, by a non-connected Galois \'etale cover, we mean a cover such that the group of deck transformations acts transitively on all complex points. Since all connected Galois \'etale 
covers of $BG_{y'}$ are of 
the form $BH$ for a normal subgroup $H\subset G_{y'}$, we see that 
there exists a normal subgroup $H\subset G_{y'}$ such that $Z_{y'}$ is 
a disjoint union of copies of $BH$. Statement (1) in the lemma is 
equivalent to the assertion
that $K^f_{y'}$ is contained in the kernel of the composite map 
$$ G_{y'}=\pi_1^{top}(BG_{y'},y') \stackrel{\phi_{yy'}}{\longrightarrow} 
\pi_1^{top}(Y,y) \longrightarrow G$$
for all such $z'$.

But the kernel of the above composite map is precisely the subgroup $H$, 
mentioned above. 
Thus $K^f_{y'} \subset H$. Note that every point $p$ of $Z$ lying over 
$y'$ gives rise to a connected component of $Z\times_YBG_{y'}$. 
Moreover, this connected component is nothing but $BG_p$, where $G_p$ 
is the isotropy group of $Z$ at $p$. Since $Z\times_YBG_{y'}$ is 
disjoint union of copies of $BH$, we see that the isotropy group at 
every point $p$ of $Z$ lying over $y'$ contains $H$ and hence also 
contains $K^f_{y'}$. Since this holds for all points $y'$ of $Y$, it 
is equivalent to the statement 2 in the lemma. 
\end{proof}

The following theorem is influenced by \cite{noohi}, and is a 
straightforward generalization of results in \cite{noohi}.

\begin{thm}\label{pi1}
Let $f\,:\,Y\,\longrightarrow\, X$ be a proper 
morphism of Deligne--Mumford stacks over $\C$. Let $y$ be a 
$\C$-point of $Y$, and let $x\,=\,f(y)$. Assume that
there exists a dense open
substack $U\subset X$ such that the
codimension of $X\backslash U$ is at least two, and the induced morphism 
$$f\vert_U\,:\,f^{-1}(U)\,\longrightarrow\, U$$ is a rigidification. 
Then the following sequence of groups is exact
$$ 1\longrightarrow N^{top}(f)\longrightarrow \pi^{top}_1(Y,y)
\longrightarrow \pi_1^{top}(X,x)\longrightarrow 1\, .$$
\end{thm}

\begin{proof} Let us first consider the case where $f$ is a 
rigidification. Let $Et(Y)$ denote 
the category of covering spaces of $Y$. Let $\widetilde{\sC}$ denote 
the full subcategory of objects $Z\stackrel{h}{\longrightarrow} Y$ in 
$Et(Y)$ such that $h^{-1}(K^f)$ is contained in the image of the 
homomorphism $I_Z\longrightarrow h^{-1}(I_Y)$, where $K^f$ is
the kernel of the homomorphism $I_Y\longrightarrow f^{-1}(I_X)$. By definition of $N^{top}(f)$ and in view of Lemma \ref{ntopkf}, we need to show that the category $\widetilde{\sC}$ is equivalent to the category
$Et(X)$ of coverings of $X$. If $X'\longrightarrow X$ is an \'etale 
covering, then it is easy to see that $X'\times_X Y\longrightarrow Y$ is 
in $\widetilde{\sC}$.

Conversely, let $Z\longrightarrow Y$ be an \'etale 
covering in $\widetilde{\sC}$. Then by Lemma \ref{etalec} there is an 
\'etale covering $X'\longrightarrow X$ such that $Z\,=\,X'\times_XY$. 

Now consider the general case where $f:Y\longrightarrow X$ is a 
rigidification outside a closed substack of codimension at least two.
As above, let $Et(Y)$ denote the category of all 
coverings of $Y$, and let $\widetilde{\sC}$ be the full subcategory 
consisting of all coverings $Z\longrightarrow Y$ such that $h^{-1}(K^f)$ 
is contained in the image of $I_Y\longrightarrow f^{-1}(I_X)$. Let 
$U\subset X$ be the open subset of $X$ whose 
complement has codimension at least two and such that 
$f^{-1}(U)\longrightarrow U$ is 
a rigidification. Let $\widetilde{\sC}_U\subset Et(f^{-1}(U))$
denote the full subcategory consisting of all coverings 
$Z\longrightarrow f^{-1}(U)$ with the property that 
$$h^{-1}(K\vert_{f^{-1}(U)}) \,\subset\, {\rm Image}( I_{Z} 
\longrightarrow 
I_{f^{-1}(U)}) $$
(just like $\widetilde{\sC}\subset Et(Y)$).

We need to show that $\widetilde{\sC}$ is equivalent to $Et(X)$. It is 
clear that for any covering $X'\longrightarrow X$ in $Et(X)$, the 
fiber product $X'\times_XY\longrightarrow Y$ is 
an object of $\widetilde{\sC}$. To complete the proof we need to show 
that any $h:Z\longrightarrow Y$ in $\widetilde{C}$ comes from an object 
in $Et(X)$. Let 
$h_U:Z_U\longrightarrow f^{-1}(U)$ denote the restriction of $h$ to 
$f^{-1}(U)$. 
Since $f^{-1}(U)\longrightarrow U$ is a rigidification, by the above 
special case
we know that there exists an object $X'_U\longrightarrow U$ in $Et(U)$ 
such that 
$Z_U=X'_U\times_Uf^{-1}(U)$. Moreover, since the complement of 
$U\subset X$ has codimension at least two, the
morphism $X'_U\longrightarrow U$ extends to a 
covering $X'\longrightarrow X$. Now to finish the proof we observe that 
the two 
covering spaces $Z\longrightarrow Y$ and $X'\times_XY\longrightarrow Y$ 
agree on the dense open 
subset $f^{-1}(U)$ and hence are isomorphic. 
\end{proof}

\begin{thm}\label{rd}
Let $X/\mathbb C$ be any quasi-projective orbifold. Then there exists a 
proper $1$--morphism $\phi:Y\longrightarrow X$ such that
\begin{enumerate}
 \item $Y$ is an orbifold which is a finite global 
quotient, and
 \item there exists a dense open subset $V\subset X$ such that 
$\phi^{-1}(V)\longrightarrow V$ is a rigidification morphism, and the 
complement $X\setminus V$ has codimension at least two. \end{enumerate}
\end{thm}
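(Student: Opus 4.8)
The plan is to discard from $X$ a closed substack of codimension at least two, analyse what remains in codimension one, reduce it to an iterated root stack over a \emph{smooth} variety, and then observe that such a root stack is a finite abelian global quotient; the morphism $\phi$ will be essentially an open immersion.

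Assume first that $X$ has trivial generic stabilizer, and let $\pi\colon X\to X_0$ be its coarse moduli space; it exists, is proper and quasi-finite, and $X_0$ is quasi-projective (Keel--Mori together with the quasi-projectivity of coarse spaces of quasi-projective Deligne--Mumford stacks in characteristic zero). Since $X$ is smooth and $\Char k=0$, $X_0$ is normal with quotient singularities, so, after factoring out pseudoreflections, $\Sing(X_0)$ has codimension $\geq 2$; as $\pi$ is quasi-finite, the same holds for $\pi^{-1}(\Sing(X_0))\subset X$. Replacing $X_0$ by its smooth locus and $X$ by the corresponding open substack, I may assume $X_0$ is smooth. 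Let $D\subset X_0$ be the union of the codimension-one components of the image of the stacky locus of $X$; deleting from $X_0$ the codimension-$\geq 2$ sets $\Sing(D)$, the non-simple-normal-crossings locus of $D$, and the image of the codimension-$\geq 2$ part of the stacky locus, and passing to preimages, I may also assume $D=\coprod_i D_i$ is an snc union of smooth divisors and that $\pi$ is an isomorphism over $X_0\smallsetminus D$.

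Along the generic point of each $D_i$ the stabilizer acts faithfully on the one-dimensional normal space, hence is cyclic of some order $r_i$ acting through a primitive character; by the \'etale-local quotient presentation of $X$ together with Kummer theory of discrete valuation rings (here $\Char k=0$), $X$ is, near that generic point, the $r_i$-th root stack of $X_0$ along $D_i$, and since $X$ carries a global $r_i$-th root of $\sO_{X_0}(D_i)$ (the reduced preimage of $D_i$) these presentations glue to an isomorphism $X\cong\sqrt[(r_i)]{(D_i)/X_0}$. For a smooth variety $X_0$ and a smooth divisor $D=(s=0)$, $s\in\Gamma(X_0,\sO(D))$, the $r$-th root stack is $[W_r/\mu_r]$, where $W_r\to X_0$ is the $r$-th root scheme $\{v\in\sO(D):v^{\otimes r}=s\}$; in local coordinates with $D=(z)$ this is $\{v^r=z\}$, whose Jacobian is nowhere zero, so $W_r$ is smooth. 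Iterating over the $D_i$ (the operations commute and smoothness persists) gives $X=[W/\Gamma_0]$ with $W$ a smooth quasi-projective scheme, affine over $X_0$, and $\Gamma_0=\prod_i\mu_{r_i}$ finite abelian. If $V_0\subset X_0$ is the open subset surviving these reductions (so $\codim(X_0\smallsetminus V_0)\geq 2$) and $V=\pi^{-1}(V_0)$, then taking $Y:=V$ and $\phi\colon Y\inj X$ the open immersion does the job: $Y$ is a finite global quotient orbifold, $\codim(X\smallsetminus V)\geq 2$, and $\phi^{-1}(V)\to V$ is the identity, which is a rigidification.

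I expect the delicate point to be the general case, in which $X$ has a non-trivial generic stabilizer. The same codimension-one analysis then presents $X|_V$ as a gerbe, banded by a finite group $A$, over a finite abelian global quotient $[W/\Gamma_0]$, and the remaining task is to absorb this gerbe into a finite-group quotient without changing the coarse space over $V$. This is where characteristic zero is really needed: one splits the class of the gerbe over a suitable finite \emph{stacky} cover of $[W/\Gamma_0]$ (so as not to alter the coarse space), after which $X|_V$ becomes $[W'/(\Gamma_0'\ltimes A)]$ for a smooth $W'$. Checking that such a cover can be produced compatibly with the $\Gamma_0$-action and entirely over the codimension-$\geq 2$-large open $V$ — rather than the root-stack bookkeeping of the schematic case — is, to my mind, the real work in the theorem.
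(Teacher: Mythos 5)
Your codimension-one analysis is essentially correct and is a genuinely different, more structural route than the paper's: after discarding codimension-$\geq 2$ loci you identify $X$ with an iterated root stack $\sqrt[(r_i)]{(D_i)/V_0}$ over a smooth $V_0$, hence with $[W/\prod_i\mu_{r_i}]$ for $W$ the fibre product of the root schemes. The paper instead builds a single global cyclic cover: it chooses $m$ divisible by every stabilizer order, enlarges the branch divisor $D$ so that $\sO(D)\cong\sL^m$ for an ample $\sL$, forms the degree-$m$ cyclic cover $T$ of the coarse space, observes that $T\dashrightarrow X$ is defined at all codimension-one points precisely because each local root order $r_i$ divides $m$ (Abhyankar), resolves the indeterminacy $\Z/m$-equivariantly to a smooth $T'$, and sets $Y=[T'/G]$ with $\phi$ the induced morphism. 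Your local computation is, in effect, the reason the paper's rational map is defined in codimension one, so the two arguments share their core.

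There are, however, two genuine problems. First, you take $\phi$ to be the open immersion $V\hookrightarrow X$, so that $\phi^{-1}(V)\to V$ is the identity. This satisfies the letter of condition (2) vacuously, but it defeats the purpose of the theorem: in the proof of Theorem \ref{ns} the morphism $\phi$ is fed into Theorem \ref{pi1}, which requires $\phi$ to be \emph{proper}, and into Lemma \ref{global}, which requires $Y$ to be \emph{projective} when $X$ is; an open substack with nonempty codimension-$\geq 2$ complement is neither. The content the theorem is meant to deliver is a proper surjective $\phi$ from a global-quotient orbifold, which is what the paper's $Y=[T'/G]$ provides. Your construction can be repaired along exactly these lines: extend your cyclic covers over all of the coarse space (e.g.\ take one cover of degree $m=\mathrm{lcm}(r_i)$, or normalize $X_0$ in the function field of $W$), resolve equivariantly in characteristic zero, and map the resulting global quotient properly onto $X$ --- at which point you have reconstructed the paper's proof. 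Second, you explicitly leave the nontrivial-generic-stabilizer (gerbe) case as a sketch. Since the paper's own argument begins by choosing a divisor $D$ outside of which the coarse map is an isomorphism, it too implicitly assumes the generic stabilizer is trivial, so this is a gap you share with the paper rather than one you introduce; but as written your proof is, by its own admission, incomplete on the case you yourself single out as ``the real work.''
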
 

\begin{proof} Let $q:X\longrightarrow X'$ be the coarse moduli space of 
$X$. 
Let $D\subset X'$ be a (reduced) divisor such that $q$ is an 
isomorphism over $X'\backslash D$. Since $X'$ is normal, there exists an 
open subset $V \subset X'$ such that
\begin{enumerate}
 \item $D \intersection V$ is smooth, and
 \item $X'\backslash V$ has codimension at least two. 
\end{enumerate}
Let $m$ be a positive integer such that for every point $p$ of $X$, the 
order of the isotropy subgroup at $p$ divides $m$. Let $\sL$ be a 
sufficiently ample line bundle on $X'$, and let $D'$ be the zero locus 
of a section of 
$\sL^m\tensor\sO_{X'}(-D)$ such that $(D+D')\intersection V$ is smooth. Note 
that since $X'$ is not necessarily smooth, the divisor $D$ may not be 
Cartier. Hence the 
reflexive sheaf $\sO_{X'}(-D)$ may not be a line bundle. We replace $D$ by 
$D+D'$ without loss of generality and assume that there exists a line 
bundle $\sL$ on $X'$ such that $\sL^m \cong \sO_{X'}(D)$.

Let $T\,\subset\, \sL$ denote the cyclic covering of $X'$ defined by the 
section $D$ of $\sL^m$. There is a natural action of $G\, :=\, \Z/m$ on
$T$ given by the action of ${\mathbb G}_m$ on $\sL$; by the choice of 
the integer $m$, we have a rational map $T \dashrightarrow X$ 
defined over all codimension one points of $X$. 

Let $\Gamma$ be the closure of the graph of the rational map 
$T\dashrightarrow X$. We note that $\Gamma$ is a Deligne--Mumford stack 
(possibly 
singular). Since the algorithm for resolution of singularities in 
\cite[p. 782, Theorem 1.0.3]{wlodarczyk} commutes with \'etale 
base change, it also 
applies to Deligne--Mumford stacks. We thus use 
\cite[p. 782, Theorem 1.0.3]{wlodarczyk} and functorially resolve the 
singularities 
of $\Gamma$ to obtain a smooth Deligne--Mumford stack $T'$ and a proper 
birational map $T'\longrightarrow \Gamma$. By functoriality of the 
resolution, there
is a natural action of $G$ on $T'$ such that the map $T'\longrightarrow 
\Gamma$ is 
$G$--equivariant. Also, by construction, the induced $G$--equivariant 
map 
$T'\longrightarrow X$ is a morphism. Thus 
we have an induced morphism $\phi:Y \longrightarrow X$, where 
$Y=[T'/G]$. It is now straight--forward to check that the
map $\phi$ satisfies the required conditions in the theorem.
\end{proof}

\section{Polystable vector bundles on orbifolds}

In this section we prove Theorem \ref{ns}. Throughout this section we work 
over the field $k\,=\,\C$.

\begin{defn}\label{ampledefn}
Let $Z$ be a projective Deligne--Mumford stack over $\C$. A line bundle 
$\sL$ on $Z$ is called ample if some power of $\sL$ descends to an ample 
line bundle on the coarse moduli space of $Z$. (See also 
\cite[2.1]{matsuki-olsson}.)
\end{defn}

Given a very ample line bundle $L_0$ on a normal projective variety 
$M_0$, the \textit{degree} $\text{degree}_{L_0}(F_0)$
of a torsionfree coherent sheaf $F_0$ on $M_0$
with respect to $L_0$ is defined to be the degree of the restriction
of $F_0$ to the general complete intersection curve obtained by
intersection hyperplanes on $M_0$ from the complete linear system
for $L_0$.

Fix an ample line bundle $\sL$ on a projective Deligne--Mumford stack 
$Z$. Let $Z_0$ be the coarse moduli space of $Z$.
The degree of a torsionfree coherent sheaf $F$ on $Z$ with respect
to $\sL$ is defined to be
$$
\text{degree}(F)\,=\, 
\text{degree}_{\sL}(F) \,:=\, \frac{1}{m^{d-1}n}\text{degree}_{{\sL}^m}
((\det F)^{\otimes n})\, ,
$$
where $d\,=\, \dim Z_0$, the integer $m$ is such that ${\sL}^m$
descends to a very ample line bundle on $Z_0$, and $n$
is such that $(\det F)^{\otimes n}$ descends to $Z_0$. It is easy
to see that $\text{degree}(F)$ is independent of the choices
of $m$ and $n$, but it depends on $\sL$.

A torsionfree coherent sheaf $F$ on $Z$ is called \textit{stable} 
(respectively,
\textit{semistable}) if for all coherent subsheaves $F'\, \subset\, F$
with $0\, <\, \text{rank}(F') \, <\, \text{rank}(F)$,
$$
\frac{\text{degree}(F')}{\text{rank}(F')}\, <\,
\frac{\text{degree}(F)}{\text{rank}(F)} ~\,~\,\text{(respectively,~}~
\frac{\text{degree}(F')}{\text{rank}(F')}\, \leq\,
\frac{\text{degree}(F)}{\text{rank}(F)}{\rm )}\, .
$$
A semistable sheaf is called \textit{polystable} if it is a direct
sum of stable sheaves.

\subsection{Normalization}\label{normalisation}
Let $X$ be an integral variety and $\Spec(L)\longrightarrow X$ be a 
dominant 
morphism, where $L$ is a field which is a finite extension of the 
function field of $X$. Then one has the notion of normalization of $X$ 
in $L$. One can easily extend this notion to the case when $L$ is 
replaced by a product of fields where the morphism is now dominant when 
restricted to each of the component fields. Now, if $\widetilde{X}$ 
denotes the normalization of $X$ in $L$, and $U\longrightarrow X$ is an 
\'etale morphism, then $U\times_X\Spec(L)$ is a product of fields with 
the map $$U\times_X\Spec(L)\longrightarrow U$$
being dominant on each component of the domain. Let 
$\widetilde{U}$ denote the normalization of $U$ in $U\times_X\Spec(L)$. 
Then the following diagram is Cartesian:
$$\xymatrix{
\widetilde{U}\ar[r]\ar[d] & \widetilde{X} \ar[d] \\
U \ar[r] & X
}$$
In other words, normalization commutes with \'etale (or even smooth) 
base change.

Thus, one can extend the notion to Deligne--Mumford stacks. In 
other words, if $X$ is a Deligne--Mumford stack, and 
$\Spec(L)\longrightarrow X$ is a 
dominant morphism from a field, one can define $\widetilde{X}$, the 
normalization of $X$ in $L$.

Concretely, to define $\widetilde{X}$, one 
first chooses an \'etale atlas $U\longrightarrow X$. Define 
$R\,:=\, U\times_XU$, and denote
by $\widetilde{U}$ (respectively, $\widetilde{R}$) the normalization of 
$U$ (respectively, $R$) 
in $U\times_X\Spec(L)$ (respectively, $R\times_X\Spec(L)$). Since 
$R\rightrightarrows U$ is an \'etale groupoid, so is $\widetilde{R} 
\rightrightarrows \widetilde{U}$. We leave these details to the reader, 
since they are simple exercise dealing with normalization of varieties. 
One then defines $\widetilde{X}$ as the quotient 
$[\widetilde{U}/\widetilde{R}]$. This 
construction 
can be seen more clearly when $X$ is quotient of a scheme $U$ by a 
finite group $G$. In this case one simply has a natural $G$ action on 
$\widetilde{U}$, and hence one defines $\widetilde{X}$ as 
$[\widetilde{U}/G]$. 

\subsection{Proof of Theorem \ref{ns}}

We first establish some ingredients of the proof.

\begin{lem}\label{des.}
Let $X/\mathbb C$ be a normal orbifold, and let $K$ be its function 
field. Let $f:Y\longrightarrow X$ be a dominant finite morphism, with 
$Y$ normal, such that the corresponding function field extension is
Galois. Let $G$ be this Galois group acting on $Y$. Let $V$
be a vector bundle on $X$, and let $W$ be a $G$--invariant
reflexive subsheaf of $f^*V$. Then there is a closed subset $S\,
\subset\,Y$ of codimension at least two such that $W\vert_{Y\setminus 
S}$ is a pullback of a unique subsheaf of $V\vert_{X\setminus f(S)}$.
\end{lem}

\begin{proof}
First assume that $X$ is a variety. Outside a closed subset
$S\, \subset\, Y$ of codimension at least two, the map $f$ is flat, and 
$W$ is locally free. The isotropies for the action of $G$ on $Y$
act trivially on the fibers of $f^*V$, and hence on the
fibers of $W$. Therefore, $W$ descends outside $f(S)$ to a unique
subbundle (see \cite[Theorem 1.2]{Ne} for descent of sheaves).

In the general case, let $U\, \longrightarrow\, X$ be an
atlas. From the uniqueness it is enough to prove by
replacing $X$ and $Y$ by $U$ and $Y\times_X U$ respectively,
and also replacing the sheaves by the corresponding pullbacks.
Then it is reduced to the above case.
\end{proof}

\begin{lem}\label{lemfinite}
Let $X/\mathbb C$ be a projective normal orbifold, and let $\sL$ be an 
ample line bundle 
on $X$. Let $f:Y\longrightarrow X$ be a dominant finite morphism, where
$Y/\mathbb C$ is 
a normal projective variety. Let $\sE$ be a vector bundle on $X$. Then 
the following two hold:
\begin{enumerate}
 \item[(i)] The vector bundle $\sE$ is semistable with respect to $\sL$ 
if and only if $f^*\sE$ is semistable with respect to $f^*(\sL)$.

\item[(ii)] If $\sE$ is polystable with respect to $\sL$, then 
$f^*\sE$ is polystable with respect to $f^*(\sL)$.
\end{enumerate}
\end{lem}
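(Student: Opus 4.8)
\emph{Plan.} The idea is to reduce the statement, for an arbitrary finite $f\colon Y\to X$, to the special case in which $X$ is a finite global quotient orbifold and $Y$ is the associated smooth cover, where it becomes an elementary invariance property; the reduction is made with Theorem~\ref{rd} together with the classical comparison along finite morphisms of normal projective varieties. First the easy half: if $\sF\subsetneq\sE$ is a coherent subsheaf with $\mu_{\sL}(\sF)>\mu_{\sL}(\sE)$, then on the open substack of $Y$ over which $f$ is flat---its complement has codimension $\geq 2$, since $Y$ is normal and $X$ is smooth---$f^{*}\sF\subset f^{*}\sE$, and by the projection formula $\deg_{f^{*}\sL}(f^{*}\sF)=(\deg f)\deg_{\sL}(\sF)$ with ranks unchanged, so $f^{*}\sE$ is not semistable; hence ``$f^{*}\sE$ semistable $\Rightarrow\sE$ semistable'', and (granting the reduction below) the polystable analogue follows as well. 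So only the implications ``$\sE$ (semi/poly)stable $\Rightarrow f^{*}\sE$ (semi/poly)stable'' require proof.

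\emph{Reduction to a global quotient.} By Theorem~\ref{rd} there is a finite global quotient orbifold $[T'/G]$, with $T'$ a smooth projective variety and $G$ finite, and a morphism $\phi\colon[T'/G]\to X$ which is a rigidification over a dense open $V\subset X$ with $\codim(X\backslash V)\geq 2$. A rigidification is finite and induces an isomorphism on coarse moduli spaces, so over $V$ the morphism $\phi$ is finite and flat, $\phi^{*}$ is fully faithful on vector bundles, and $\deg_{\phi^{*}\sL}(\phi^{*}\sG)=\deg_{\sL}(\sG)$. Since a vector bundle on a smooth Deligne--Mumford stack and its saturated subsheaves extend, via reflexive hulls, across a closed substack of codimension $\geq 2$ without changing (semi/poly)stability or first Chern classes, one deduces that $\sE$ is semistable (resp.\ polystable) on $X$ if and only if $\phi^{*}\sE$ is semistable (resp.\ polystable) on $[T'/G]$---the nontrivial direction using that the maximal destabilizing subsheaf, resp.\ the socle, of $\phi^{*}\sE$ is canonical, so that its pullbacks under the two projections of $[T'/G]\times_{X}[T'/G]$ coincide, hence it descends along the faithfully flat $\phi$. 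Next, given the original $f$, let $W$ be the normalization of an irreducible component of $Y\times_{X}T'$ surjecting onto $T'$. Then $W\to T'$ is finite surjective (a base change of the finite $f$), $W\to Y$ is proper, generically finite and surjective, and the two pullbacks of $\sE$ to $W$ agree. By the classical comparison for finite morphisms of normal projective varieties---applied to $W\to T'$, and, after Stein factorization, to $W\to Y$, using that $\mu$-semistability and $\mu$-polystability with respect to a pulled-back polarization are birational invariants---the (semi/poly)stability of $f^{*}\sE$ on $Y$ is equivalent to that of the pullback of $\sE$ to $W$, hence to that of the pullback of $\phi^{*}\sE$ to $T'$. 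Thus it suffices to treat $X=[M_{0}/G]$, $Y=M_{0}$ with $M_{0}$ smooth projective and $Y\to X$ the quotient morphism.

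\emph{The global quotient case.} Now $\sE$ is a $G$-equivariant vector bundle on $M_{0}$, $f^{*}\sE$ is its underlying bundle $E_{0}$, and $f^{*}\sL$ is $G$-equivariant, so the slope of a subsheaf of $E_{0}$ is $G$-invariant. Hence $\sE$ is semistable on $[M_{0}/G]$ exactly when $E_{0}$ has no $G$-invariant saturated destabilizing subsheaf; but were $E_{0}$ unstable its maximal destabilizing subsheaf would be canonical, hence $G$-invariant---so $\sE$ is semistable on $[M_{0}/G]$ if and only if $E_{0}$ is semistable on $M_{0}$. Likewise the socle of $E_{0}$ is canonical, hence $G$-invariant, giving $\mathrm{soc}_{M_{0}}(E_{0})=\mathrm{soc}_{[M_{0}/G]}(\sE)$ as $G$-sheaves, so $\sE$ is polystable on $[M_{0}/G]$ if and only if $E_{0}$ is polystable on $M_{0}$. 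Combined with the reduction, this proves the Lemma.

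\emph{Main obstacle.} The delicate part is the reduction paragraph, specifically the codimension bookkeeping: Theorem~\ref{rd} makes $\phi$ a rigidification only over the big open $V$, so one must check that replacing everything by reflexive hulls over $X\backslash V$---and over the exceptional locus of $W\to Y$, which may have codimension one in $W$ until one passes to the Stein factorization---neither creates nor destroys (semi/poly)stability and is compatible with the descent of the maximal destabilizing subsheaf and of the socle; this relies throughout on the insensitivity of first Chern classes, slopes and reflexive extensions to closed subsets of codimension $\geq 2$. The second point needing care is the faithfully flat descent of those canonical subsheaves along $\phi$ (and along $M_{0}\to[M_{0}/G]$): one needs the pullback-compatibility of the Harder--Narasimhan filtration and of the socle over the relevant fibered products, which is reduced to the classical statement for normal projective varieties by passing to atlases.
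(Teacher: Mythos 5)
Your strategy is genuinely different from the paper's, and as written it has real gaps. The paper proves this lemma by directly adapting the classical argument for finite morphisms of normal projective varieties (\cite[3.2]{huybrechts}): destabilizing subsheaves of $\sE$ pull back for the easy direction, and for the converse one uses the uniqueness of the maximal destabilizing subsheaf (resp.\ the socle) of $f^*\sE$, so that its two pullbacks to $Y\times_X Y$ coincide and it descends to $X$ over the big open locus where $f$ is flat, after which one saturates. Your detour through Theorem~\ref{rd} is not needed for this lemma and introduces difficulties that the direct argument avoids.

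The most serious gap is in your reduction step. The morphism $T'\to X$ produced by Theorem~\ref{rd} factors through a proper \emph{birational} modification $T'\to T$, so the polarization on $T'$ pulled back from $\sL$ --- and its further pullback to $W$ --- is nef and big but not ample. The ``classical comparison for finite morphisms of normal projective varieties'' that you invoke for $W\to T'$ is proved in \cite[3.2]{huybrechts} only for ample polarizations; for a non-ample class the existence and uniqueness of the maximal destabilizing subsheaf and of the socle (the very inputs to your descent arguments) require separate justification, as does the asserted ``birational invariance'' of $\mu$-polystability along $W\to Y$ after Stein factorization --- the polystable version of that invariance is not a formal consequence of the semistable one. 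These are precisely the points where the work lies, and they are flagged but not carried out. A second, smaller gap is in the global quotient case: $\mathrm{soc}_{M_0}(E_0)$ and $\mathrm{soc}_{[M_0/G]}(\sE)$ are defined with respect to different stability notions (plain versus $G$-equivariant stability), and identifying them already uses the implication ``$\sE$ stable on $[M_0/G]$ implies $E_0$ polystable on $M_0$'', i.e.\ part of what is being proved; this is the content of \cite[3.2]{huybrechts} and must be argued rather than deduced from ``canonicity'' alone. Replacing the reduction by the direct descent along $Y\times_X Y\rightrightarrows Y$ sketched above makes all of these issues disappear.
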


\begin{proof}
After one understands the notion of normalization of a Deligne--Mumford 
stack in a field (see \S~\ref{normalisation}), the proof is 
similar to the case when $X$ is a variety instead of an orbifold.
If $f^*\sE$ is semistable with respect to $f^*\sL$, then
clearly $\sE$ is semistable with respect to $\sL$. To prove the 
converse, assume that $\sE$ is semistable with respect to $\sL$.
We may replace $Y$ by its normalization in the Galois closure of the 
function field of $Y$ over that of $X$. If $f^*\sE$ is not semistable,
take the first term $F$ in the Harder--Narasimhan filtration of
$f^*\sE$. The descent of $F$ (see Lemma \ref{des.}) contradicts
semistability of $\sE$. Therefore, $f^*\sE$ is semistable.

Now assume that $\sE$ is polystable with respect to $\sL$. Hence
$f^*\sE$ is semistable with respect to $f^*\sL$ (by
part (i)). Consider the
socle $F\, \subset\, f^*\sE$; it is the unique maximal polystable
subsheaf with $\text{degree}(F)/\text{rank}(F) \,=\,
\text{degree}(f^*\sE)/\text{rank}(f^*\sE)$ (see \cite[p. 23, Lemma 
1.5.5]{huybrechts}). From the uniqueness of $F$ it follows that $F$
is a pullback of a subsheaf $F'$ of $\sE$ outside codimension
two (Lemma \ref{des.}). Since $\sE$ is polystable,
$F'\, \subset\, \sE$ has a direct summand $F''$. If $F\, \not=\,
f^*\sE$, the direct sum of $F$ with the socle of $f^*F''$ contradicts
the maximality of $F$. Hence $F\, =\, f^*\sE$, implying that
$f^*\sE$ is polystable.
\end{proof}

Let $X$ and $\mathcal L$ be as above. Let $d$ be the dimension of
$X$. All Chern classes considered here are topological, taking
values in rational cohomology classes.

\begin{prop}\label{prop-a}
Let $f\,:\,Y\,\longrightarrow \,X$ be any dominant finite
morphism of projective orbifolds. Fix an ample line bundle ${\mathcal 
L}$ on $X$. So $f^* {\mathcal L}$ is ample on $Y$. Let 
$\sE$ be a polystable vector bundle on $X$ with
respect to ${\mathcal L}$ such that 
$c_1(\sE)\,=\,0$, and $c_2(\sE)\cdot c_1({\mathcal L})^{d-2}\,=\,0$.
Then $f^*\sE$ is a polystable vector bundle on $Y$ with respect
to $f^* {\mathcal L}$. Also, $c_1(f^*\sE)\,=\,0$, and $c_2(f^*\sE)
\cdot c_1(f^*{\mathcal L})^{d-2}\,=\,0$.
\end{prop}

\begin{proof}
Clearly, $c_1(f^*\sE)\,=\,0$, and $c_2(f^*\sE)\cdot c_1(f^*{\mathcal
L})^{d-2}\,=\,0$, because these conditions hold $\sE$ and $\mathcal L$.

By \cite{vistoli-kresch} we can find a diagram $$\xymatrix{ 
Y'\ar[r]^h\ar[d]^{f'} & Y \ar[d]^f \\ X' \ar[r]^g & X }$$ where $Y'$ and 
$X'$ are smooth projective varieties, and the horizontal morphisms are 
finite and dominant. By Lemma \ref{lemfinite}, the pullback $g^*\sE$ is 
polystable, and also, $c_1(g^*\sE)\,=\,0$, and $c_2(g^*\sE)\cdot 
c_1(g^*{\mathcal L})^{d-2}\,=\,0$; note that $g^*{\mathcal L}$ is ample
because $g$ is finite. Thus, $g^*\sE$ is given by a unitary 
representation of the fundamental group of $X'$
(see Theorem \ref{mrns} and the paragraph following it). 
Therefore, $f'^*g^*\sE$ is also given by a unitary representation of the 
fundamental 
group of $Y'$, and hence it is polystable. Thus by Lemma \ref{lemfinite}, 
the pullback $f^*\sE$ is polystable.
\end{proof}

\begin{rmk}\label{rk1}
We first observe that proving Theorem \ref{ns} is equivalent to showing 
that if $\sE\,\longrightarrow\, X$ is polystable, and 
$\pi_X:U_X\longrightarrow X$ is the 
universal covering space of $X$, then 
\begin{enumerate}
 \item $\pi_X^*(\sE)$ is trivial, and
 \item $\pi_X^*(\sE)$ admits a $\pi_1^{top}(X,x)$ invariant
flat unitary metric. 
\end{enumerate}
\end{rmk}

\begin{lem}\label{global}
Let $X$ be a finite global quotient. Then
Theorem \ref{ns} holds for $X$.
\end{lem}

\begin{proof}
Take $\sE\,\longrightarrow\, X$ as in Theorem \ref{ns}. First
assume that $\sE$ is given by a unitary 
representation $\rho$ of $\pi_1^{top}(X,x)$. To prove that $\sE$
is polystable, first note that for any vector bundle $F$ on
$X$ given by a unitary representation of $\pi_1^{top}(X,x)$,
we have $c_1(F)\,=\, 0\,=\, c_2(F)$. Also, any unitary representation
is a direct sum of irreducible unitary representations. Therefore, it is
enough to prove polystability under the assumption that $\rho$ is
irreducible. Assume that $\rho$ is irreducible.

By hypothesis, there exists a smooth projective variety $Y$ and a finite
group $G$ acting on $Y$ such that $X\,=\,[Y/G]$. Let $$f\, :\,
Y\,\longrightarrow\, X$$ be the natural map. Since $f^*\sE$ is
given by the unitary representation $f^*\rho$, it follows that
$f^*\sE$ is polystable with respect to any polarization on
$Y$, in particular with respect to $f^*\mathcal L$ \cite[pp. 
177--178, Theorem 8.3]{Ko}.

We note that the vector bundle $\sE$ is polystable
because $f^*\sE$ is so (see Lemma \ref{lemfinite}).
To prove that $\sE$ is stable,
let $F\, \subset\, \sE$ be a nonzero subsheaf such that
$\text{rank}(F) \, <\, \text{rank}(\sE)$, and $\text{degree}(F)
\,=\, 0$. Then $\text{degree}(f^*F) \,=\, 0$. This implies
that $f^*F$ generates a subbundle of $f^*\sE$ which is
given by some some subrepresentation of $f^*\rho$ (see the last
paragraph of page 178 in the proof of \cite[Theorem 8.3]{Ko}).
Since $f^*F$ is a pullback from $X$, this contradicts the
irreducibility of $\rho$. Hence $\sE$ is stable.

To prove the converse, assume that $\sE\,\longrightarrow\, X$ is a 
polystable vector bundle with $c_1(\sE)\,=\, 0\,=\, c_2(\sE)\cdot 
c_1({\mathcal L})^{n-2}$. Take $Y$ as above such that $X\,=\, [Y/G]$.
Then $f^*\sE$ is a polystable vector bundle on $Y$ with $c_1(f^*\sE)
\,=\,0$ and $c_2(f^*\sE)\cdot c_1(f^*\sL)^{n-2}=0$, where $f$ is the
quotient map. Let $$\pi_Y:U_Y\longrightarrow Y$$ denote the universal 
covering space of $Y$. Then $\pi_Y^*f^*\sE$ is a trivial vector bundle 
on $U_Y$ with a $\pi_1^{top}(Y,y)$--invariant unitary flat metric, say 
$\langle -,- \rangle$. Note that $U_Y$ is also the covering space for 
$X$. Moreover, if $x=f(y)$, then $\pi_1^{top}(Y,y)$ is naturally a 
finite index subgroup of $\pi_1^{top}(X,x)$. Let $\{g_i\}$ denote its 
finite coset representatives. It is then clear that the ``averaged'' 
metric $$(v_1,v_2)\,:=\,\sum_ig_i^*\langle v_1,v_2\rangle$$ is a 
$\pi_1^{top}(X,x)$ invariant unitary flat metric on $U_Y$.
\end{proof}

We now reduce the general case of Theorem \ref{ns} to the above special 
case using Theorem \ref{rd}.

\begin{proof}[Proof of Theorem \ref{ns}]
As in Lemma \ref{global}, if $\sE$ is given by a
unitary representation
of $\pi_1^{top}(X,x)$, then $\sE$ is polystable,
and $c_1(\sE)\,=\, 0$ and $c_2(\sE)\cdot c_1({\mathcal
L})^{n-2}\,=\, 0$.

Assume that $\sE$ is polystable, 
and $c_1(\sE)\,=\, 0 $ and $c_2(\sE)\cdot c_1({\mathcal
L})^{n-2}\,=\, 0$.
Let $\phi:Y\longrightarrow X$ be a morphism as guaranteed by Theorem 
\ref{rd}. Fix a point $y_0$ of $Y$. Then, 
since $Y$ is a finite global quotient, by Lemma \ref{global}
and Proposition \ref{prop-a}, the pullback
$\phi^*\sE$ is given by a unitary representation of 
$\pi_1^{top}(Y,y_0)$. 

However, for any point $y\in Y$, the kernel of $G_y\longrightarrow 
G_{\phi(y)}$ acts trivially on the fiber of $\phi^*(\sE)$ at $y$. Thus 
this kernel is also contained in the kernel of the unitary representation. 
Then by Theorem \ref{pi1}, the unitary representation of 
$\pi_1^{top}(Y,y_0)$ which defines $\phi^*\sE$ actually factors through 
$\pi_1^{top}(X,\phi(y_0))$. This completes the proof
of Theorem \ref{ns}.
\end{proof}

As a consequence of 
Theorem \ref{ns}, we obtain the following generalization of Theorem 
\ref{mrns} to certain types of singular varieties.

\begin{cor}
Let $X/\C$ be a projective variety with at worst quotient singularities. 
Assume $X$ is the coarse moduli space of a projective orbifold 
$\widetilde{X}/\C$ (e.g. $X$ has isolated singular points). Then Theorem 
\ref{mrns} holds for $X$.
\end{cor}

\begin{proof}
Let $\sL$ be an ample line bundle on $X$ and $\sE$ be a polystable 
vector bundle satisfying $c_1(\sE)=0$ and $c_2(\sE)\cdot  c_1(\sL
)^{n-2}\,=\,0$, where $n$ is the dimension of $X$. Let $\pi\,:\,
\widetilde{X}\longrightarrow X$ be the given morphism. Since the 
pullback $\pi^*\sE$ is polystable (see Lemma \ref{lemfinite}), by 
Theorem \ref{ns}, the vector bundle $\pi^*\sE$ is given by a unitary 
representation of $\pi_1^{top}(\widetilde{X},x)$ (for some geometric 
point $x$ of $\widetilde{X}$). However, since the vector bundle arising 
from this representation is a pull back from $X$, it is clear that for 
every point $y$ of $\widetilde{X}$, the isotropy group $G_y$ at $y$ lies 
in the kernel of this representation. Thus by using \cite[p. 90, 
\S~8.1]{noohi} this descends to give a unitary representation of 
$\pi_1^{top}(X,x)$, which defines the vector bundle $\sE$.
\end{proof}

%%%%%%%%%%%%%%%%%%%%%%%%%%%%%%%%%%%%%%%%%%%%%%%%%%%%%%%%%%%%%%%%%%%%%%%%

\end{document}